\documentclass[10pt,letterpaper]{amsart}
\usepackage{fancyhdr}
\usepackage{enumitem}
\usepackage{amsmath,amsthm,amssymb,mathrsfs}
\usepackage{tikz-cd}
\usepackage{marginnote}
\newcommand{\filename}{Slope-K-stab-PPn-Calculations-11-Sept-2026.tex} 

\providecommand{\binom}[2]{{#1\choose#2}}

\renewcommand{\geq}{\geqslant}
\renewcommand{\leq}{\leqslant}
\newcommand{\Osh}{{\mathcal O}}                        

\newcommand{\Sym}{\operatorname{Sym}}

\newcommand{\K}{\mathrm{K}}      
                      
\newcommand{\kk}{\mathbf{k}}

\newcommand{\PP}{\mathbb{P}} 
\newcommand{\QQ}{\mathbb{Q}} 
\newcommand{\ZZ}{\mathbb{Z}} 

\newtheorem{theorem}{Theorem}[section]
\newtheorem{lemma}[theorem]{Lemma}
\newtheorem{corollary}[theorem]{Corollary}

\theoremstyle{definition}

\newtheorem{example}[theorem]{Example}

\numberwithin{equation}{section}

\title[Slope $\K$-semistability along a linear subspace]{Seshadri slope $\K$-semistability for the blow-up of projective space along a linear subspace}

\author{Nathan Grieve}

\address{
Department of Mathematics, National Taiwan University, Astronomy and Mathematics Building 5F, No. 1, Sec. 4, Roosevelt Rd., Taipei 10617, Taiwan (R.O.C.); School of Mathematics and Statistics, 4302 Herzberg Laboratories, Carleton University, 1125 Colonel By Drive, Ottawa, ON, K1S 5B6, Canada; 
D\'{e}partement de math\'{e}matiques, Universit\'{e} du Qu\'{e}bec \`a Montr\'{e}al, Local PK-5151, 201 Avenue du Pr\'{e}sident-Kennedy, Montr\'{e}al, QC, H2X 3Y7, Canada
}
\email{nathan.m.grieve@gmail.com}%

\begin{document}

\begin{abstract}
We study a particular instance of the frame work from \cite{Grieve:CM:Line:Slope:Stab}, building on earlier work of Arezzo-et-al \cite{Arezzo:DellaVedova:LaNave}, Ross and Thomas \cite{Ross:Thomas:2007}, \cite{Ross:Thomas:2006} and others, which applies the theory of the Chow-Mumford line bundle to determine the Donaldson-Futaki invariant of deformation to the normal cone test configurations with respect to big and nef line bundles.  In particular, here we consider the case of projective $n$-space blown-up along a linear subspace.  Our main result arises as an application of  \cite[Theorem 1.1]{Grieve:CM:Line:Slope:Stab}.  It establishes $\K$-semistability of the deformation to the normal cone test configuration for the blow-up $\PP(\Osh_{\PP^s}^{\oplus r} \oplus \Osh_{\PP^s}(1)) \simeq \operatorname{Bl}_{\PP^{r-1}}(\PP^s)$ along the exceptional divisor and with respect to the tautological line bundle $\Osh_{\PP(\Osh_{\PP^s}^{\oplus r} \oplus \Osh_{\PP^s}(1))}(1)$.

\medskip

\noindent\textsc{R\'esum\'e.}
Nous \'{e}tudions un cas particulier du cadre d\'efini dans \cite{Grieve:CM:Line:Slope:Stab}, en nous appuyant sur les travaux ant\'{e}rieurs d'Arezzo et al. \cite{Arezzo:DellaVedova:LaNave}, de Ross et Thomas \cite{Ross:Thomas:2007}, \cite{Ross:Thomas:2006}, et d'autres, qui applique la th\'eorie du fibr\'e en droites de Chow-Mumford pour d\'eterminer l'invariant de Donaldson-Futaki de la configuration test de d\'eformation vers le c\^one normal, relativement \`a des fibr\'es en droites big et nef. Nous consid\'erons ici, en particulier, le cas de l'espace projectif de dimension $n$ \'eclat\'e le long d'un sous-espace lin\'{e}aire. Notre r\'{e}sultat principal d\'{e}coule d'une application du \cite[Theorem 1.1]{Grieve:CM:Line:Slope:Stab}. Il \'etablit la $\K$-semi-stabilit\'e de la configuration test de d\'{e}formation vers le c\^one normal pour l'\'{e}clatement $\mathbb{P}(\mathcal{O}_{\mathbb{P}^s}^{\oplus r} \oplus \mathcal{O}_{\mathbb{P}^s}(1)) \simeq \text{Bl}_{\mathbb{P}^{r-1}}(\mathbb{P}^s)$ le long du diviseur exceptionnel, relativement au fibr\'{e} en droites tautologique $\mathcal{O}_{\mathbb{P}(\mathcal{O}_{\mathbb{P}^s}^{\oplus r} \oplus \mathcal{O}_{\mathbb{P}^s}(1))}(1)$.
\end{abstract}

\thanks{
\emph{Mathematics Subject Classification (2020):}  14L24, 14J10.  \\
\emph{Key Words: $\K$-stability, slope stability, big and nef line bundles. }  \\
The author thanks the Natural Sciences and Engineering Research Council of Canada for their support through his grants, DGECR-2021-00218 and RGPIN-2021-03821, and also the National Science and Technology Council (Taiwan) for their support through his grants 115-2115-M-002-003-MY3 and 115-2811-M-002-043.\\
ORCID: https://orcid.org/0000-0003-3166-0039
\\
Date: \today.  \\
File name: \filename}

\maketitle

\section{Introduction}

The recent article \cite{Grieve:CM:Line:Slope:Stab}, building on earlier work of Arezzo-et-al \cite{Arezzo:DellaVedova:LaNave}, Ross and Thomas \cite{Ross:Thomas:2007}, \cite{Ross:Thomas:2006} and others, applies the theory of the Chow-Mumford line bundle to determine the Donaldson-Futaki invariant of deformation to the normal cone test configurations with respect to big and nef line bundles.  Here, we apply that result, \cite[Theorem 1.1]{Grieve:CM:Line:Slope:Stab} and prove Theorem \ref{big:and:nef:blow:up:PP:n} below.  Here and elsewhere we refer to \cite{Grieve:CM:Line:Slope:Stab} for notation and definitions as they pertain to test configurations and deformation to the normal cone.  Throughout, we work over an algebraically closed characteristic zero base field $\kk$.

\begin{theorem}\label{big:and:nef:blow:up:PP:n}  
Let  $\pi \colon \PP(\mathcal{E}) \simeq \operatorname{Bl}_{\PP^{r-1}}(\PP^s) \rightarrow \PP^s$, for $\mathcal{E} := \Osh_{\PP^s}^{\oplus r} \oplus \Osh_{\PP^s}(1)$, be the blowing-up of $\PP^s$ along an $r-1$ plane.  Let $L_0 := \Osh_{\PP(\mathcal{E})}(1)$ and let $E$ be the exceptional divisor.  Let $(\mathcal{X},\mathcal{L}_c)=(\mathcal{X},\mathcal{L}_1)$ be the deformation to the normal cone test configuration for 
$$X := \PP(\mathcal{E}) := \operatorname{Proj} \left(\operatorname{Sym}^\bullet (\mathcal{E}) \right) \rightarrow \PP^s$$ 
with respect to $L_0$ and along $E$ with $c = \epsilon(L_0;E) = 1$.  Then $(\mathcal{X},\mathcal{L}_1)$ has Donaldson-Futaki invariant equal to zero.
\end{theorem}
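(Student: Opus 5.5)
The plan is to apply \cite[Theorem 1.1]{Grieve:CM:Line:Slope:Stab}, which expresses the Donaldson--Futaki invariant of $(\mathcal{X},\mathcal{L}_c)$ as a positive multiple of a Ross--Thomas type slope difference. If
$$\chi(X, kL_0 - xkE) = a_0(x)k^{n} + a_1(x)k^{n-1} + O(k^{n-2}),$$
then $\mathrm{DF}(\mathcal{X},\mathcal{L}_c)$ is a positive multiple of $\mu_c(E,L_0) - \mu(X,L_0)$. Here $\mu(X,L_0) = -\frac{n K_X\cdot L_0^{n-1}}{2L_0^n}$, and $\mu_c$ is the usual quotient
$$\mu_c = \frac{n\int_0^c \bigl(a_1(x) + \tfrac12 a_0'(x)\bigr)\,dx}{2\int_0^c a_0(x)\,dx}$$
(together with the boundary term from \cite{Grieve:CM:Line:Slope:Stab}). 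So it suffices to show these two slopes agree at $c=1$.

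\textbf{Step 1 (identify $L_0$ and $E$ concretely).} I will check that $L_0 = \Osh_{\PP(\mathcal{E})}(1)$ is the pullback $H$ of the hyperplane class under the blow-down $X \to \PP^n$, where $n = \dim X$. The class $L_0 - E$ is the pullback of the hyperplane class under the linear projection away from the blown-up linear subspace $\Lambda \simeq \PP^{r-1}$. Thus $L_0 - xE$ is nef for $x\in[0,1]$ and fails to be big-and-nef beyond $1$, which confirms $\epsilon(L_0;E)=1$. Since $X$ is toric, I will compute $h^0(X, k L_0 - jE)$ for $0\le j\le k$ exactly; higher cohomology vanishes by toric vanishing. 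Choose coordinates $x_0,\dots,x_{r-1}$ (which cut out nothing) and $y_0,\dots,y_{n-r}$ (the equations of $\Lambda$). Then this space is the span of degree-$k$ monomials whose total degree in the $y$'s is at least $j$. This gives $a_0(x)$ and $a_1(x)$ in closed form as lattice-point counts of a truncated simplex, with $a_0(x) = \frac{1}{n!}\bigl(1 - \text{(volume of the cut-off piece)}\bigr)$. The computation $K_X\cdot H^{n-1} = -(n+1)$ gives $\mu(X,L_0) = \frac{n(n+1)}{2}$.

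\textbf{Step 2 (the key identity, and the expected obstacle).} Proving $\mu_1(E,L_0)=\mu(X,L_0)$ by brute-force integration of the lattice-point polynomials is the main obstacle, because of the many binomial sums involved. To avoid this, I will first reinterpret the problem. At $c=1$ the filtration of $H^0(X,kL_0)=H^0(\PP^n,\Osh(k))$ by order of vanishing along $E$ is exactly the weight filtration for the $\mathbb{G}_m$-action on $\PP^n$ that scales the $y$-coordinates. Since $L_0-E$ is semiample, the central fibre of the deformation to the normal cone at $c=1$ should be (after passing to the ample model) $\PP^n$ itself with this action. Hence $(\mathcal{X},\mathcal{L}_1)$ has the same Hilbert-weight asymptotics as a product test configuration. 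Its Donaldson--Futaki invariant therefore equals the classical Futaki invariant of this vector field on $(\PP^n,\Osh(1))$. That invariant vanishes because $\PP^n$ is K\"ahler--Einstein; combinatorially, the barycenter of the standard simplex is fixed by the symmetry permuting coordinates, so the weight function has zero average with the correct normalization.

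As a check, I will verify directly that the total weight $w_k = \sum_{j} j\cdot \dim(\text{monomials of } y\text{-degree } j)$ satisfies $\frac{w_k}{k N_k} = \frac{n-r+1}{n+1}$ exactly for all $k$. This holds by symmetry among the $n+1$ variables: each variable has average exponent $k/(n+1)$. Consequently the $k^{-1}$ coefficient of $\frac{w_k}{kN_k}$, which is the Donaldson--Futaki invariant, is zero. Finally, I will match this normalization with the formula in \cite[Theorem 1.1]{Grieve:CM:Line:Slope:Stab}, including the sign and constant conventions, to conclude that $\mathrm{DF}(\mathcal{X},\mathcal{L}_1)=0$.
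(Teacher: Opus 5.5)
Your proposal is correct, but your Step 2 takes a genuinely different route from the paper.

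\emph{The paper's route.} It stays inside the slope formalism. From the relation $M^{r+1}=NM^r$ in $A^\bullet(\PP(\mathcal{E}))$ it gets $M^iN^{n-i}=1$ for $i\geq r$ and $0$ otherwise. It then writes $\alpha_0(t)$, $\alpha_1(t)$ and $\alpha_0'(t)$ as binomial sums in $(1-t)^it^{n-i}$ and integrates term by term using $\int_0^1(1-t)^kt^\ell\,\mathrm{d}t=\frac{k!\,\ell!}{(k+\ell+1)!}$. This gives $\int_0^1\alpha_0=\frac{n-r+1}{(n+1)!}$, $\int_0^1\alpha_1=\frac{(n-r+1)(r+1)+(n-r)^2}{2n!}$ and $\frac12\int_0^1\alpha_0'=-\frac{1}{2n!}$. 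The identity $(n-r+1)(r+1)+(n-r)^2-1=n(n-r+1)$ then yields $\mu_1(L_0;E)=\frac{n(n+1)}{2}=\mu(L_0)$, and \cite[Theorem 1.1]{Grieve:CM:Line:Slope:Stab} converts this into $\mathrm{DF}=0$.

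\emph{Your route.} You observe that at $c=1$ the vanishing-order filtration of $H^0(X,kL_0)=\kk[x,y]_k$ along $E$ is split by the $y$-grading. It also terminates exactly at $j=k$, since no degree-$k$ monomial has $y$-degree exceeding $k$; this reflects that $L_0-E=N$ is nef but not big, so there is no truncation. Hence the weights are those of the product configuration on $(\PP^n,\Osh(1))$ induced by a one-parameter subgroup. The exponent symmetry among the $n+1$ variables then makes $w_k/(kN_k)$ constant in $k$.

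\emph{Comparison.} Your argument explains why the answer is zero: it is a classical Futaki invariant on the K\"ahler--Einstein $\PP^n$, or equivalently the barycenter of the simplex. It also avoids all the binomial sums. In fact it bypasses \cite[Theorem 1.1]{Grieve:CM:Line:Slope:Stab}. The price is that you must identify the Donaldson--Futaki invariant of \cite{Grieve:CM:Line:Slope:Stab} for the big and nef $\mathcal{L}_1$ with the weight-theoretic one. That identification is fine here: toric vanishing for the nef classes $kL_0-jE$, $0\leq j\leq k$, gives $h^0=\chi$ and the Ross--Thomas decomposition of $H^0(\mathcal{X}_0,\mathcal{L}_1^k)$ into graded pieces. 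The paper's route is more computational, but it is the same intersection-number machinery that produces the $\epsilon>0$ and $a\neq 1$ computations of Section 2.

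One slip to correct. With $\mu(X,L_0)=a_1/a_0=-\frac{nK_X\cdot L_0^{n-1}}{2L_0^n}$, the matching slope used in the paper is
\[
\mu_c=\frac{\int_0^c\bigl(a_1(x)+\tfrac12a_0'(x)\bigr)\,\mathrm{d}x}{\int_0^c a_0(x)\,\mathrm{d}x},
\]
without your extra factor $\frac{n}{2}$. This does not affect your Step 2, which never uses that formula.
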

\begin{proof}
Applying \cite[Theorem 1.1]{Grieve:CM:Line:Slope:Stab}, the conclusion is that the deformation to the normal cone test configuration has Donaldson-Futaki invariant equal to a positive multiple of the quantity \eqref{slope:stab:key:eqn}.  The conclusion of Theorem \ref{big:and:nef:blow:up:PP:n} thus follows as an application of Theorem \ref{instability:main:thm}.
\end{proof}

The conclusion of Theorem \ref{big:and:nef:blow:up:PP:n} can be expressed at the level of numerical classes.  This is achieved via Corollary \ref{big:and:nef:blow:up:PP:n:cor} below.

\begin{corollary}\label{big:and:nef:blow:up:PP:n:cor}
With the same hypothesis as Theorem \ref{big:and:nef:blow:up:PP:n}, let  $M$ be the numerical class of the tautological line bundle $\Osh_{\PP(\mathcal{E})}(1)$ and let $E$ be the numerical class of the exceptional divisor.  Then the big and nef class $M$ is Seshadri slope $\K$-semistable with respect to $E$.  It is not Seshadri slope $\K$-stable.
\end{corollary}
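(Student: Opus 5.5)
The plan is to reduce both assertions of Corollary \ref{big:and:nef:blow:up:PP:n:cor} to Theorem \ref{big:and:nef:blow:up:PP:n}, working through the numerical slope quantity of \cite[Theorem 1.1]{Grieve:CM:Line:Slope:Stab}. The first step is to recall the definition from \cite{Grieve:CM:Line:Slope:Stab}. A big and nef class $M$ is \emph{Seshadri slope $\K$-semistable} with respect to $E$ if, for every $0 < c \leq \epsilon(M;E)$, the Donaldson-Futaki invariant of the deformation to the normal cone test configuration $(\mathcal{X},\mathcal{L}_c)$ is nonnegative. It is \emph{stable} if these invariants are strictly positive. By \cite[Theorem 1.1]{Grieve:CM:Line:Slope:Stab}, each such invariant is a positive multiple of an explicit polynomial expression in $c$. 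This expression, which is the quantity \eqref{slope:stab:key:eqn}, involves only the intersection numbers $M^{n-i}\cdot E^{i}$ and $K_X\cdot M^{n-1-i}\cdot E^{i}$. The condition is therefore purely numerical, which is what makes the statement at the level of classes legitimate.

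Next, I will compute the relevant intersection numbers on $X=\PP(\mathcal{E})$ with $n=\dim X = s+r$ (as a projective bundle of rank $r+1$ over $\PP^s$). Write $H=\pi^*\Osh_{\PP^s}(1)$. The tautological class satisfies $M = E + H$ and is base point free, since $\mathcal{E}$ is globally generated. The relevant Chern relation is $M^{r}(M-H)=0$, and $\epsilon(M;E)=1$ holds because $M - E = H$ is nef but not big. For the canonical class I will use $K_X = -(r+1)M + (\det\mathcal{E}) \cdot$pullback $+ K_{\PP^s}$, that is, $K_X = -(r+1)M + H - (s+1)H$. Substituting these values, \eqref{slope:stab:key:eqn} becomes a one-variable function $f(c)$ on $(0,1]$.

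The main step, which I expect to be the obstacle, is to show that $f(c)\geq 0$ on $(0,1]$ with $f(1)=0$. The vanishing $f(1)=0$ is exactly Theorem \ref{big:and:nef:blow:up:PP:n}, which the paper derives from Theorem \ref{instability:main:thm}. Nonnegativity for $c<1$ I would obtain either from the sign analysis in Theorem \ref{instability:main:thm}, if it is stated for all $c$, or directly. For the direct route, I would express the $c$-dependent integrals $\int_0^c$ of volumes of $M - xE$ in closed form. Since $M - xE = (1-x)E + H$, the volume $(M-xE)^n$ is a polynomial in $x$ computed from $E^{i}H^{n-i}$. I would then check that the resulting expression factors as a positive quantity times a power of $(1-c)$, or at least is monotone with its zero at $c=1$.

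Finally, the two conclusions follow. Nonnegativity on $(0,\epsilon(M;E)]$ gives slope $\K$-semistability with respect to $E$. The vanishing of the Donaldson-Futaki invariant at $c=\epsilon(M;E)=1$, from Theorem \ref{big:and:nef:blow:up:PP:n}, exhibits a test configuration with zero invariant, so $M$ is not Seshadri slope $\K$-stable.
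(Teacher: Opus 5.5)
\textbf{Assessment.} Your argument for the failure of stability is exactly the paper's: the invariant vanishes at $c=\epsilon(M;E)=1$. The semistability half, however, rests on a step that is unnecessary, unproved, and false as you have oriented it.

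In this paper, Seshadri slope $\K$-(semi)stability of $L$ along $E$ is tested only at the single value $c=\epsilon(L;E)$; see Example \ref{slope:stability:FFa:surfaces} and the formulation of Theorem \ref{instability:main:thm}. So the paper's proof is one line. The quantities $\alpha_0(t)$ and $\alpha_1(t)$ are intersection numbers, hence depend only on numerical classes, and $\mu_1(M;E)-\mu(M)=0$ by Theorem \ref{instability:main:thm}. An exact zero gives ``semistable but not stable'' whatever sign convention one uses. Note that \eqref{slope:stab:key:eqn} is this single number, not a function of $c$. So Theorem \ref{instability:main:thm} gives you nothing for $c<1$, and you do not carry out the direct computation you propose.

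\textbf{The direct computation gives the opposite sign.} Put $u=1-t$ and $Q_{m,k}(u)=\sum_{i\geq k}\binom{m}{i}u^i(1-u)^{m-i}$. For $a=1$ the formulas of Section \ref{case:of:PPs:OOr:OOa} read
\[
n!\,\alpha_0=Q_{n,r},\qquad 2(n-1)!\,\bigl(\alpha_1+\tfrac12\alpha_0'\bigr)=rQ_{n-1,r-1}+(s+1)Q_{n-1,r}.
\]
The recursion $Q_{n,r}=uQ_{n-1,r-1}+(1-u)Q_{n-1,r}$ turns the integrand into an exact derivative, and one gets
\[
\int_0^c\Big(\alpha_1(t)+\tfrac12\alpha_0'(t)-\mu(M)\,\alpha_0(t)\Big)\,\mathrm{d}t=-\frac{1}{2(n-1)!}\binom{n-1}{r-1}(1-c)^r c^{s+1}.
\]
For instance, $\mu_c(M;E)-\mu(M)=-3c(1-c)/(3-c^2)$ when $r=s=1$. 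Hence your $f(c)=\mu_c(M;E)-\mu(M)$ is strictly negative on $(0,1)$, and the inequality $f\geq 0$ you aim for is false.

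\textbf{How it could be repaired.} The paper's examples use the convention that semistability along $E$ means $\mu_c-\mu\leq 0$. In that convention, the same identity proves the stronger statement that the inequality holds for every $c\in(0,1]$, strictly for $0<c<1$. So your route can be repaired, but only after you fix the orientation of the Donaldson--Futaki invariant consistently, and it proves more than the corollary requires.
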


\begin{proof}
As is evident from their definitions in Section \ref{case:of:PPs:OOr:OOa}, the slope stability invariants that are used to determine the nature of the Donaldson-Futaki invariant that arises in Theorem \ref{big:and:nef:blow:up:PP:n} are well-defined at the level of numerical classes.  Thus the conclusion desired by Corollary \ref{big:and:nef:blow:up:PP:n:cor} follows as a direct consequence of Theorem \ref{big:and:nef:blow:up:PP:n}.
\end{proof}

\subsection*{Acknowledgements}
The author thanks colleagues for their interest, discussions and correspondence on related topics.  The author thanks the Natural Sciences and Engineering Research Council of Canada for their support through his grants, DGECR-2021-00218 and RGPIN-2021-03821, and also the National Science and Technology Council (Taiwan) for their support through his grants 115-2115-M-002-003-MY3 and 115-2811-M-002-043.  Some preliminary calculations which led to the discovery of Theorem \ref{big:and:nef:blow:up:PP:n} were conducted jointly with Ruiran Sun while he was a CRM postdoctoral fellow at McGill University and jointly supported by the author, Julien Keller and Steven Lu. 

\section{The case of $\PP(\Osh_{\PP^s}^{\oplus r} \oplus \Osh_{\PP^s}(a))$, for $a>0$}\label{case:of:PPs:OOr:OOa}

Fixing positive integers $r$ and $s$, together with a positive integer $a \in \ZZ_{>0}$, over projective space $\PP^s$, consider the vector bundle
$$
\mathcal{E} = \Osh_{\PP^s}^{\oplus r} \oplus \Osh_{\PP^s}(a) 
$$
together with its projectivization
$$
\pi \colon \PP(\mathcal{E}) := \operatorname{Proj}\left(\Sym^\bullet(\mathcal{E})\right) \rightarrow \PP^s \text{.}
$$
Then
$\dim \PP(\mathcal{E}) = n := s + r \text{.}$

Inside of the Chow ring
$A^\bullet(\PP(\mathcal{E})) = A^\bullet_{\QQ}(\PP(\mathcal{E})) \text{,}$
let $M$ denote the class of $\Osh_{\PP(\mathcal{E})}(1)$ and let $N$ denote the class of $\pi^*\Osh_{\PP^s}(1)$.  

The class of the canonical line bundle is then
$$
\K = -(r+1)M - (s+1-a)N \text{.}
$$
Put
$E := M - N \text{.}$
Fixing 
$\epsilon \in [0,1) \bigcap \QQ \text{,}$
set
$$L_{\epsilon} := M - \epsilon E = (1 - \epsilon)M + \epsilon N$$
and
$c := 1 - \epsilon \text{.}$
Then
$c = \epsilon(L_{\epsilon};E) \in \QQ \text{.}$

We want to describe in explicit terms the \emph{Seshadri slope $\K$-stability invariants}
$$
\mu_c(L_{\epsilon};E) := \frac{ \int_0^c \left(\alpha_1(t) + \frac{\alpha_0'(t)}{2} \right) \mathrm{d}t}{\int_0^c \alpha_0(t)\mathrm{d}t}
\text{ and }
\mu(L_{\epsilon}) := \frac{\alpha_1}{\alpha_0} \text{.}
$$

Here, for $0 \leq t \leq c$, we have defined
$$
\alpha_0(t) := \frac{1}{n!} \left( (L_{\epsilon}-tE)^n \right) \text{, }
\alpha_1(t) := - \frac{\left(\K \cdot (L_{\epsilon}-tE)^{n-1}\right)}{2(n-1)!}
$$
$$
\alpha_0 = \frac{(L_{\epsilon}^n)}{n!}
\text{ 
and
}
\alpha_1 = - \frac{(\K \cdot L_{\epsilon}^{n-1})}{2(n-1)!} \text{.}
$$

In calculating the slope stability invariants, working inside of the Chow ring, we make use of the fact that 
$$
A^\bullet(\PP(\mathcal{E})) \simeq A^\bullet(\PP^s)[M] / \langle M^{r+1} - a N M^r \rangle 
$$
where
$$A^\bullet(\PP^s) \simeq \QQ[N] / \langle N^{s+1} \rangle \text{.}$$
We refer to \cite[Section 3.3]{Fulton} or \cite[Section 9.3]{Harris:Eisenbud:3264} for more details.

Using these isomorphisms, it follows that
$$
\alpha_0(t) = \frac{1}{n!} \sum_{i=r}^n \binom{n}{i}(1-\epsilon - t)^i(\epsilon + t)^{n-i}a^{i-r}
$$

\begin{align*}
\alpha_0'(t) & = - \frac{1}{(n-1)!} \left( \sum_{i = r - 1}^{n-1} \binom{n-1}{i}(1-\epsilon - t)^i(\epsilon + t)^{n-1-i} a^{i-r+1}  \right. \\
&
\left. 
- \sum_{i=r}^{n-1} \binom{n-1}{i}(1 - \epsilon - t)^i (\epsilon + t)^{n-1-i} a^{i-r} \right)
\end{align*}
and

\begin{align*}
\alpha_1(t) & = 
- \frac{1}{2 (n-1)!} \left( - (r+1) \sum_{i = r - 1}^{n-1} \binom{n-1}{i}(1-\epsilon - t)^i(\epsilon + t)^{n-1-i}a^{i-r+1} \right. \\
&
\left. -(s+1-a) \sum_{i=r}^{n-1} \binom{n-1}{i}(1 - \epsilon - t)^i (\epsilon + t)^{n-1-i} a^{i-r} \right) \text{.}
\end{align*}

For later use, see Section \ref{Case:of:OOr:OO1}, we remark that in calculating the quantity
\begin{equation}\label{big:nef:slope}
\mu_1(L_0;E)  = \frac{ \int_0^1 \left( \alpha_0(L_0;E, t) + \frac{\alpha_0'(L_0;E,t)}{2} \right) \mathrm{d}t }{ \int_0^1 \alpha_0(L_0;E,t) \mathrm{d}t } 
\end{equation}
it is helpful to recall that for all nonnegative integers $k,\ell$
$$
\int_0^1(1-t)^kt^\ell \mathrm{d}t = \frac{k! \ell!}{(k+\ell+1)!} \text{.}
$$

For instance, it follows that 
$$
\int_0^1 \alpha_0(L_0;E,t) \mathrm{d}t = \frac{1}{(n+1)!} \sum_{i=r}^n a^{i-r}
$$

$$
\int_0^1 \alpha_1(L_0;E,t) \mathrm{d}t = \frac{1}{2n!} \left((r+1) \sum_{i = r-1}^{n-1} a^{i - r +1} + (n-r+1-a) \sum_{i=r}^{n-1} a^{i-r} \right)
$$
and
$$
\int_0^1 \frac{\alpha_0'(t)}{2}\mathrm{d}t = \frac{-1}{2n!} \left( \sum_{i=r-1}^{n-1} a^{i-r+1} - \sum_{i=r}^{n-1} a^{i-r} \right) \text{.}
$$

Let us now illustrate the above for some special values of $a$ and $r$.

\begin{example}\label{slope:stability:FFa:surfaces}
Here, we consider the special case that $r = 1$ and $a>0$.  In particular, we consider the case of the $\mathbb{F}_a$-surface 
$$
\pi \colon \mathbb{F}_a \simeq \PP(\mathcal{E}) := \operatorname{Proj} \left( \operatorname{Sym}^\bullet ( \mathcal{E} ) \right) \rightarrow \PP^1 \text{, for $a \in \ZZ_{>0}$.}
$$
Here $\mathcal{E} := \Osh_{\PP^1} \oplus \Osh_{\PP^1}(a) \text{.}$ Inside of the Chow ring $A^\bullet(\PP(\mathcal{E}))$, let $M$ denote the class of $\Osh_{\PP(\mathcal{E})}(1)$ and let $N$ denote the class of $\pi^* \Osh_{\PP^1}(1)$.  The canonical class is then
$$
\K = - 2 M + (a-2) N \text{.}
$$
Furthermore, the following relations hold true
$$
\text{$N^i = 0$, for $i > 1$, $M^2 = a M N$; and $M^i = 0$ for $i > 2$.}
$$

Put
$E := M - N$
and, for each 
$\epsilon \in [0,1) \bigcap \QQ \text{,}$
set
$$
L_\epsilon = M - \epsilon E 
 = (1-\epsilon) M + \epsilon N \text{;}
 $$
in particular
$M = L_ 0 $
and $c := \epsilon(L_\epsilon;E) = 1 - \epsilon \text{.}$ 

Let us now consider the question of \emph{Seshadri-slope $\K$-stability} of the big and nef class $L_\epsilon$ with respect to the exceptional class $E$.   (Note that $E$ is contracted under the map that is induced by the complete linear series $|\Osh_{\PP(\mathcal{E})}(1)|$.)

In particular, the \emph{numerical slope quantities} that of interest are respectively calculated to be
$$
\mu = \mu(L_{\epsilon}) 
= \frac{(1 - \epsilon)a + \epsilon - \frac{(a-2)(1-\epsilon)}{2} }{ \frac{(1-\epsilon)^2 a + 2 \epsilon (1 - \epsilon) }{2} } 
$$
which is the \emph{slope} of $X$ with respect to $L_{\epsilon}$ and
$$
\mu_{1 - \epsilon} = \mu_{1 - \epsilon}(L_{\epsilon};E) 
= \frac{\frac{\epsilon^2}{2} - \frac{3}{2}\epsilon + 1}{- \frac{1}{6}(a-2)\epsilon^3 + \frac{1}{2}(a-1)\epsilon^2 - \frac{1}{2}a\epsilon + \frac{1}{6}a+\frac{1}{6}} 
$$
which is the \emph{slope of $L_{\epsilon}$ along $E$}.  

To determine the extent to which $L_{\epsilon}$ is \emph{slope $\K$-semistable along $E$} and with respect to the Seshadri constant $\epsilon(L_\epsilon;E)$, it is necessary to study the extent to which the difference 
\begin{align*}
\mu_{1 -\epsilon} (L_{\epsilon};E)- \mu(L_{\epsilon}) & 
= \frac{\frac{\epsilon^2}{2} - \frac{3}{2}\epsilon + 1}{- \frac{1}{6}(a-2)\epsilon^3 + \frac{1}{2}(a-1)\epsilon^2 - \frac{1}{2}a\epsilon + \frac{1}{6}a+\frac{1}{6}} \\
& - 
\frac{(1 - \epsilon)a + \epsilon - \frac{(a-2)(1-\epsilon)}{2} }{ \frac{(1-\epsilon)^2 a + 2 \epsilon (1 - \epsilon) }{2} } \text{.} \end{align*}
is nonpositive.

When $\epsilon = 0$, the above difference simplifies to give
\begin{align*}
\mu_{1 } (L_{0};E)- \mu(L_{0}) & = \frac{-a^{2}+3a-2}{a^{2}+a} \text{.}
\end{align*}
It follows, when $\epsilon = 0$, that
$$
\mu_{1 } (L_{0};E)- \mu(L_{0})  
\begin{cases}
= 0 & \text{ if $a =  1,2$; and} \\
< 0 & \text{ if $a \not =  1, 2$.}
\end{cases}
$$

\end{example}

\begin{example}\label{ex:r:1:a:1}
We now consider Example \ref{slope:stability:FFa:surfaces} for the special case that $r = 1$ and $a = 1$.  It is instructive to consider this case in further details.  In doing so, we also illustrate the smallest instance of Theorem \ref{instability:main:thm}.   
In this case, when $a = 1$, we have that
\begin{align*}
\mu_{1 - \epsilon} (L_{\epsilon};E) - \mu(L_{\epsilon};E) & = 
\frac{\frac{\epsilon^2}{2} - \frac{3}{2} \epsilon + 1}{\frac{1}{6}\epsilon^3 - \frac{1}{2}\epsilon + \frac{2}{6}} - \frac{\epsilon - 3}{\epsilon^2 - 1}  =  \frac{2 \epsilon}{\epsilon^2 + 3 \epsilon + 2} 
\end{align*}
whence, when $a = 1$,
$$
\mu_{1 - \epsilon} (L_{\epsilon};E) - \mu(L_{\epsilon};E) \begin{cases} 
= 0 & \text{ if $\epsilon = 0$} \\
> 0 & \text{ if $\epsilon > 0$.}
\end{cases}
$$

For completeness, we consider next the case that $a = 2$.  In this case
\begin{align*}
\mu_{1 - \epsilon} (L_{\epsilon};E) - \mu(L_{\epsilon};E) & = 
\frac{\frac{\epsilon^2}{2}-\frac{3}{2}\epsilon+1}{\frac{1}{2}\epsilon^2-\epsilon+\frac{1}{2}} - \frac{2(1-\epsilon)+\epsilon}{(1-\epsilon)^2+\epsilon(1-\epsilon)}  = 0 \text{.}
\end{align*}
\end{example}

As a final introductory example, and to motivate Section \ref{Case:of:OOr:OO1} and Theorem \ref{instability:main:thm},  turning to higher dimensions, the next simplest case is the situation of $\PP^3$ blown-up along a line.  

\begin{example}[Compare with {\cite[Theorem 1]{Hashimoto:2018}}]\label{PP^3:blownup:line} 
Over the projective line $\PP^1$ consider the vector bundle
$$
\mathcal{E} := \Osh_{\PP^1}^{\oplus 2} \oplus \Osh_{\PP^1}(1)
$$
together with its associated projective bundle
$$
\pi \colon \PP(\mathcal{E}) := \operatorname{Proj} \left(\operatorname{Sym}^\bullet(\mathcal{E}) \right)\rightarrow \PP^3 \text{.}
$$
Let $\Osh_{\PP(\mathcal{E})}(1)$ be the tautological line bundle;  the morphism
$$
\PP(\mathcal{E}) \xrightarrow{|\Osh_{\PP(\mathcal{E})}(1)|} \PP^3
$$
is identified with $\PP^3$ blown-up along a line.  In particular, the tautological line bundle $\Osh_{\PP(\mathcal{E})}(1)$ is big and nef.  

Inside of the Chow group $A^\bullet(\PP(\mathcal{E}))$ let $M$ denote the class of $\Osh_{\PP(\mathcal{E})}(1)$, $N$ the class of $\pi^*\Osh_{\PP^1}(1)$ and set
$E := M - N  \text{.}$
The canonical class is then
$\K := - 3 M - N \text{.}$
As in Example \ref{slope:stability:FFa:surfaces}, for $\epsilon \in [0,1) \bigcap \QQ$, set
$$L = L_\epsilon:= M - \epsilon E \text{.}$$

In determining the extent to which the big and nef class $L_{\epsilon}$ is slope stable along $E$, the relevant numerical  quantities are
$$
\mu_{1 - \epsilon}(L_\epsilon,E) 
= 
\frac{\frac{1}{2}\epsilon^2 - \epsilon + \frac{1}{2} }{- \frac{1}{12}\epsilon^4 + \frac{1}{6}\epsilon^3 - \frac{1}{6}\epsilon + \frac{1}{12}} 
=  - \frac{6}{\epsilon^2-1}
$$
and
$$
\mu = \mu(L_\epsilon) 
= \frac{ - \frac{\epsilon^2}{2} - \frac{\epsilon}{2}+1}{\frac{\epsilon^3}{3} - \frac{\epsilon^2}{2} + \frac{1}{6} } \\
= \frac{-6\epsilon -12}{4\epsilon^2-2\epsilon-2 } 
\text{.}
$$
In particular it follows that
$$
\mu_{1 - \epsilon}  - \mu 
= \frac{3\epsilon}{2\epsilon^2+3\epsilon + 1} \geq 0 \text{.}
$$

Observe that the above calculations imply, in particular, that the big and nef class 
$M = L_0$ is slope semistable yet not slope stable along $E$ with respect to its nef threshold $c = 1$ 
whereas the ample class $L_\epsilon =  M - \epsilon E$
is not slope semistable along $E$ with respect to its nef threshold 
$c = 1 - \epsilon \text{.}$
The Seshadri slope $\K$-semistablity of $M$ along $E$ is a special instance of Theorem \ref{instability:main:thm}.  The Seshadri slope $\K$-instability of $L_{\epsilon}$ for $\epsilon \in (0,1) \bigcap \QQ$, is a special case of the main result of \cite{Hashimoto:2018}.
\end{example}

\section{Slope $\K$-semistability for  $\PP(\Osh_{\PP^s}^{\oplus r} \oplus \Osh_{\PP^s}(1))$}\label{Case:of:OOr:OO1}

We now consider in further detail the setting of Section \ref{case:of:PPs:OOr:OOa} with particular emphasis on the case that $a = 1$.   In particular, set 
$$
\mathcal{E} = \Osh_{\PP^s}^{\oplus r} \oplus \Osh_{\PP^s}(1) 
$$
and recall that the natural morphism
$$
\PP(\Osh_{\PP^s}^{\oplus r} \oplus \Osh_{\PP^s}(1)) \xrightarrow{|\Osh_{\PP(\mathcal{E})}(1)|} \PP^n\text{,} 
$$
for $n := r + s$, is identified with the blowing-up of $\PP^n$ along an $r-1$-plane.  
The class of the exceptional divisor $E$ is $M - N$.  Here $M$ is the class of $\Osh_{\PP(\mathcal{E})}(1)$ whereas $N$ is the class of the pullback of $\Osh_{\PP^s}(1)$ to $\PP(\mathcal{E})$.

In this section, our aim is to establish \emph{Seshardi slope $\K$-semistability} for the big and nef class $L_0 = M$ with respect to the exceptional divisor $E$.  In particular, this is a consequence of Theorem \ref{instability:main:thm} below which also implies the conclusion of Theorem \ref{big:and:nef:blow:up:PP:n}.

\begin{theorem}\label{instability:main:thm}
Let $\mathcal{E} = \Osh_{\PP^s}^{\oplus r} \oplus \Osh_{\PP^s}(1)$ and let $L_0 = \Osh_{\PP(\mathcal{E})}(1)$.  Then $L_0$ is Seshadri slope $\K$-semistable along the exceptional divisor $E$.  In more precise terms
\begin{equation}\label{slope:stab:key:eqn}
\mu_1(L_0;E) - \mu(L_0) = 0 \text{.}
\end{equation}
\end{theorem}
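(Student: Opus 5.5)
The plan is a direct numerical computation. I specialize the integral formulas of Section \ref{case:of:PPs:OOr:OOa} to $a=1$ and $\epsilon=0$, and compare the result with $\mu(L_0)$ computed directly in the Chow ring. Throughout I write $s=n-r$. In the numerator of \eqref{big:nef:slope} I read the integrand as $\alpha_1+\tfrac{1}{2}\alpha_0'$, as in the definition of $\mu_c$; the $\alpha_0$ in the display appears to be a typo.

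\emph{Step 1: the slope $\mu_1(L_0;E)$.} Set $a=1$ in the three displayed integrals. Every power $a^{i-r}$ becomes $1$, so each sum just counts its terms. The sum over $r\leq i\leq n$ has $s+1$ terms, the sum over $r-1\leq i\leq n-1$ has $s+1$ terms, and the sum over $r\leq i\leq n-1$ has $s$ terms. Also the coefficient $n-r+1-a$ becomes $s$. This gives
$$\int_0^1\alpha_0=\frac{s+1}{(n+1)!},\qquad \int_0^1\alpha_1=\frac{(r+1)(s+1)+s^2}{2\,n!},\qquad \int_0^1\frac{\alpha_0'}{2}=-\frac{(s+1)-s}{2\,n!}=-\frac{1}{2\,n!}.$$
The numerator of $\mu_1(L_0;E)$ is therefore $\frac{(r+1)(s+1)+s^2-1}{2\,n!}$. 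The key simplification is the factorization
$$(r+1)(s+1)+s^2-1=(s+1)(r+s)=(s+1)n.$$
Dividing by $\int_0^1\alpha_0$ then gives $\mu_1(L_0;E)=\frac{n(n+1)}{2}$.

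\emph{Step 2: the slope $\mu(L_0)$.} From the relation $M^{r+1}=NM^r$ (the case $a=1$) together with $N^{s+1}=0$, we get $M^n=N^sM^r=1$ and $N M^{n-1}=N^sM^r=1$. Geometrically, these are the degree of $\PP^n$ and the degree of a hyperplane section. Now take $\K=-(r+1)M-sN$, which is the case $a=1$ of the canonical class formula. Then
$$-(\K\cdot M^{n-1})=(r+1)+s=n+1.$$
Hence $\alpha_0=\frac{1}{n!}$, $\alpha_1=\frac{n+1}{2(n-1)!}$, and $\mu(L_0)=\frac{n(n+1)}{2}$. This is the familiar slope of $(\PP^n,\Osh(1))$, as expected, since $M$ is pulled back from $\PP^n$.

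\emph{Conclusion.} Comparing the two steps gives \eqref{slope:stab:key:eqn}. Semistability along $E$ then holds in the sense used for the theorem, since the required inequality $\mu_1(L_0;E)-\mu(L_0)\leq 0$ holds with equality. Equality also shows that $L_0$ is not slope stable along $E$.

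The main obstacle I expect is bookkeeping rather than ideas. It consists of three checks:
\begin{itemize}
\item the displayed integral formulas really were obtained from $\int_0^1(1-t)^kt^\ell\,\mathrm{d}t=\frac{k!\,\ell!}{(k+\ell+1)!}$ with the correct sign in $\alpha_0'$;
\item the index ranges, and hence the term counts $s+1$, $s+1$, $s$, are right;
\item the constant in $\K$ is $s+1-a=s$.
\end{itemize}
As sanity checks I would test the closed form against Example \ref{ex:r:1:a:1} (the case $r=s=1$, $\epsilon=0$) and Example \ref{PP^3:blownup:line} (the case $r=2$, $s=1$, $\epsilon=0$). In both, the difference is $0$, consistent with the formula above.
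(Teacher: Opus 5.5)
Your proposal is correct and follows essentially the same route as the paper. You specialize the three integrals of Section \ref{case:of:PPs:OOr:OOa} to $a=1$, $\epsilon=0$, which reproduces Lemma \ref{slope:invariants:a:0}, and you compute $\mu(L_0)=n(n+1)/2$ as in Lemma \ref{epsilon:zero:big:nef:slope:a0}, where your direct use of $M^n=NM^{n-1}=1$ is the same computation. You then conclude via $(r+1)(s+1)+s^2-1=(s+1)n$, which is the paper's identity $(n-r+1)(r+1)+(n-r)^2-n(n-r+1)-1=0$ written in terms of $s$. You are also right that the integrand in the numerator of \eqref{big:nef:slope} should read $\alpha_1+\tfrac{1}{2}\alpha_0'$, and the paper's proof does use $\alpha_1$.
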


Before proving Theorem \ref{instability:main:thm} we make note of a couple of lemmas.  

\begin{lemma}\label{epsilon:zero:big:nef:slope:a0}
In the setting of Theorem \ref{instability:main:thm}
$$
\mu(L_0) = \frac{\alpha_1(L_0)}{\alpha_0(L_0)} = \frac{(n+1)n}{2} \text{.}
$$
\end{lemma}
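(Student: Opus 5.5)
The plan is to compute both $\alpha_0(L_0)$ and $\alpha_1(L_0)$ directly in the Chow ring presentation from Section \ref{case:of:PPs:OOr:OOa}, specialised to $a=1$ and $\epsilon = 0$, so that $L_0 = M$. With $a=1$ the relation reads $M^{r+1} = N M^r$, and iterating it gives
$$M^{r+i} = N^i M^r \qquad (i \geq 0).$$
Together with $N^{s+1}=0$ and the normalisation $(N^s M^r) = 1$ (a fibre of $\pi$ is a $\PP^{r-1}$, on which $M$ restricts to the hyperplane class), this determines all top-degree monomials.

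First, taking $i = s$ gives
$$(M^n) = (M^{r+s}) = (N^s M^r) = 1, \qquad\text{so}\qquad \alpha_0(L_0) = \frac{1}{n!}.$$
As a sanity check, this agrees with the general formula for $\alpha_0(t)$ at $t=0$, $\epsilon=0$: only the $i=n$ term survives. Second, for $\alpha_1$, use $\K = -(r+1)M - sN$ (that is, $s+1-a = s$). We have $(M^n)=1$, and
$$(N \cdot M^{n-1}) = (N \cdot N^{s-1}M^r) = (N^s M^r) = 1.$$
Hence
$$(\K\cdot M^{n-1}) = -(r+1) - s = -(n+1), \qquad \alpha_1(L_0) = \frac{n+1}{2(n-1)!}.$$

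Dividing the two gives
$$\mu(L_0) = \frac{(n+1)\, n!}{2\,(n-1)!} = \frac{n(n+1)}{2},$$
as claimed.

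As an independent cross-check, I would note that $M = f^*\Osh_{\PP^n}(1)$ for the blow-down $f \colon \PP(\mathcal{E}) \to \PP^n$. Also $\K = f^*\K_{\PP^n} + (r-1)E$, and $(E\cdot M^{n-1}) = 0$ because $E$ maps to the $(r-1)$-plane, which has dimension less than $n-1$. The projection formula then returns $(M^n)=1$ and $(\K\cdot M^{n-1}) = -(n+1)$, as on $\PP^n$. There is no real obstacle here; the only point needing care is the iteration $M^{r+i}=N^iM^r$ and the normalisation $(N^sM^r)=1$, which fix the signs and the value.
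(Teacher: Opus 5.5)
Your argument is correct and is essentially the paper's: the paper specializes its general formulas for $\alpha_0(t)$ and $\alpha_1(t)$ to $t=\epsilon=0$, $a=1$, where only the top binomial terms survive, and this is exactly your evaluation $(M^n)=(N\cdot M^{n-1})=1$ and $(\K\cdot M^{n-1})=-(n+1)$. There are two slips to correct, neither of which affects the result: the fibres of $\pi$ are $\PP^r$ (since $\mathcal{E}$ has rank $r+1$), and this is what actually justifies $(N^sM^r)=1$; and in your cross-check the centre $\PP^{r-1}\subset\PP^n$ has codimension $s+1$, so $\K=f^*\K_{\PP^n}+sE$ rather than $(r-1)E$, which is harmless there because $(E\cdot M^{n-1})=0$.
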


\begin{proof}
Since $n = r + s$, the description of $\alpha_0(L_0)$ and $\alpha_1(L_0)$ follow from the description of $\alpha_0(t)$ and $\alpha_1(t)$ when $t = 0$, $\epsilon = 0$ and $a = 1$. 
For the final assertion simply note that
$$
\mu(L_0) := \frac{\alpha_1(L_0)}{\alpha_0(L_0)} = \frac{n+1}{2(n-1)!} \frac{n!}{1} = \frac{(n+1)n}{2} \text{.}
$$
\end{proof}

\begin{lemma}\label{slope:invariants:a:0}
In the setting of Theorem \ref{instability:main:thm} it holds true that
\begin{enumerate}
\item[(i)]{
$\int_0^1 \alpha_0(L_0;E,t) \mathrm{d}t = \frac{n-r+1}{(n+1)!}$
}
\item[(ii)]{
$ \int_0^1 \alpha_1(L_0;E,t) \mathrm{d}t = \frac{(n-r+1)(r+1)+(n-r)^2}{2n!}$
}
\item[(iii)]{
$\frac{1}{2} \int_0^1 \alpha_0'(L_0;E,t) \mathrm{d}t =  \frac{- 1}{2n!}$.
}
\end{enumerate}
\end{lemma}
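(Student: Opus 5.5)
The plan is to specialize the general integrated formulas of Section \ref{case:of:PPs:OOr:OOa} to the case $\epsilon = 0$ and $a = 1$. Everything then reduces to counting the terms in finite sums. First I re-derive those integrated formulas from the closed expressions for $\alpha_0(t)$, $\alpha_0'(t)$ and $\alpha_1(t)$.

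With $\epsilon = 0$, each summand of $\alpha_0(t)$ has the form $\frac{1}{n!}\binom{n}{i}(1-t)^i t^{n-i}a^{i-r}$. The Beta integral $\int_0^1 (1-t)^k t^\ell\,\mathrm{d}t = \frac{k!\,\ell!}{(k+\ell+1)!}$ then gives
$$\frac{1}{n!}\binom{n}{i}\frac{i!\,(n-i)!}{(n+1)!} = \frac{1}{(n+1)!}.$$
So each binomial summand contributes exactly $a^{i-r}/(n+1)!$. In the same way, every summand $\frac{1}{(n-1)!}\binom{n-1}{i}(1-t)^i t^{n-1-i}$ appearing in $\alpha_0'(t)$ and $\alpha_1(t)$ integrates to $1/n!$. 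This recovers the three displayed formulas for the integrals of $\alpha_0$, $\alpha_1$ and $\alpha_0'/2$. The only check needed is the sign in $\alpha_1$: the outer factor $-\frac{1}{2(n-1)!}$ multiplies the negative coefficients $-(r+1)$ and $-(s+1-a)$, so the result is positive. I also substitute $s = n - r$, so that $s+1-a = n-r+1-a$.

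Next I set $a = 1$, so that every power $a^{i-r}$ or $a^{i-r+1}$ equals $1$ and each sum simply counts its indices.
\begin{itemize}
\item[(i)] The sum $\sum_{i=r}^n 1 = n-r+1$ gives $\frac{n-r+1}{(n+1)!}$.
\item[(ii)] The first sum is $\sum_{i=r-1}^{n-1} 1 = n-r+1$, multiplied by $r+1$. The coefficient $n-r+1-a$ becomes $n-r$, and it multiplies $\sum_{i=r}^{n-1} 1 = n-r$. This gives $\frac{(n-r+1)(r+1)+(n-r)^2}{2n!}$.
\item[(iii)] The difference of counts is $(n-r+1)-(n-r) = 1$, which yields $-\frac{1}{2n!}$.
\end{itemize}

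As an independent cross-check of (iii), I would use the fundamental theorem of calculus: $\frac12\int_0^1\alpha_0' = \frac12(\alpha_0(1)-\alpha_0(0))$. Here $\alpha_0(0) = (M^n)/n! = 1/n!$, because $M$ is the pullback of the hyperplane class under the birational morphism to $\PP^n$. Also $\alpha_0(1) = (N^n)/n! = 0$, because $N$ is pulled back from $\PP^s$ and $n > s$. These give the same value $-\frac{1}{2n!}$.

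I do not expect a genuine obstacle, since the argument is bookkeeping. The one place to be careful is matching the index ranges of each sum, where the $(n-1)$-sums start at $r-1$ versus $r$. The other is tracking the signs and the substitution $s+1-a = n-r$ in (ii), since an off-by-one error there would spoil the cancellation that Theorem \ref{instability:main:thm} relies on.
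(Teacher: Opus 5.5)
Your proposal is correct and follows the paper's proof: you set $\epsilon = 0$ and $a = 1$, and the Beta integral makes each binomial summand contribute $1/(n+1)!$ (for $\alpha_0$) or $1/n!$ (for $\alpha_1$ and $\alpha_0'$), after which each sum just counts its indices. Your fundamental-theorem-of-calculus cross-check of (iii), using $\alpha_0(0) = (M^n)/n! = 1/n!$ and $\alpha_0(1) = (N^n)/n! = 0$, is a valid extra check that the paper does not include.
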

\begin{proof}
For (i) note that
$$
\alpha_0(L_0;E,t) = \frac{1}{n!} \sum_{i=r}^n \binom{n}{i} (1-t)^i t^{n-i} \text{.}
$$
Thus
\begin{align*}
\int_0^1 \alpha_0(L_0;E,t)\mathrm{d}t &= \frac{1}{n!} \sum_{i=r}^n \frac{n!}{(n-i)!i!} \frac{i! (n-i)!}{(n+1)!} \\
& = \frac{1}{n!} \sum_{i=r}^n \frac{1}{n+1} 
= \frac{n-r+1}{(n+1)!} \text{.}
\end{align*}

For (ii)
\begin{align*}
\alpha_1(L_0;E) & = - \frac{1}{2(n-1)!} \left( -(r+1) \sum_{i=r-1}^{n-1} \binom{n-1}{i}(1-t)^i t^{n-1-i} \right. \\ 
& \left. - (n-r) \sum_{i=r}^{n-1}\binom{n-1}{i}(1-t)^it^{n-1-i}\right) \text{.}
\end{align*}
Thus
\begin{align*}
\int_0^1 \alpha_1(L_0;E,t)\mathrm{d}t &= - \frac{1}{2(n-1)!} \left( -(r+1) \sum_{i=r-1}^{n-1} \frac{(n-1)!}{i!(n-1-i)!} \frac{i!(n-1-i)!}{n!} \right. \\
& 
\left.  - (n-r) \sum_{i=r}^{n-1} \frac{ (n-1)!}{i!(n-1-i)!} \frac{i!(n-1-i)!}{n!} \right) \\
&= \frac{1}{2(n-1)!} \left( (r+1) \sum_{i=r-1}^{n-1} \frac{1}{n} + (n-r) \sum_{i=r}^{n-1} \frac{1}{n} \right) \\
& = \frac{1}{2n(n-1)!} \left( (r+1)(n-r+1) + (n-r)(n-r) \right) \\
& = \frac{ (n-r+1)(r+1) + (n-r)^2}{2n (n-1)! } \text{.}
\end{align*}

Finally, for (iii) 
\begin{align*}
\alpha_0'(L_0;E,t) &= \frac{-1}{(n-1)!} \left( \sum_{i=r-1}^{n-1} \binom{n-1}{i} (1 - t)^it^{n-1-i} \right. \\
& \left. - \sum_{i=r}^{n-1}\binom{n-1}{i} (1 - t)^i t^{n-1-i} \right) \text{.}
\end{align*}
Thus
\begin{align*}
\int_0^1 \frac{\alpha_0'(L_0;E,t)}{2} \mathrm{d}t &= \frac{-1}{2(n-1)!} \left( \sum_{i = r-1}^{n-1} \binom{n-1}{i} \frac{i!(n-1-i)!}{n!} \right. \\
& \left. - \sum_{i=r}^{n-1} \binom{n-1}{i} \frac{i!(n-1-i)!}{n!} \right) \\
&= \frac{-1}{2(n-1)!} \left( \frac{n-r+1}{n} - \frac{n-r}{n} \right) = \frac{-1}{2n(n-1)!} \text{.}
\end{align*}
\end{proof}

With these preparatory calculations at hand, we now establish Theorem \ref{big:and:nef:blow:up:PP:n} (in the form of Theorem \ref{instability:main:thm}).

\begin{proof}[Proof of Theorem \ref{instability:main:thm}]
By Lemma \ref{slope:invariants:a:0}, observe that $\mu_1(L_0;E)$ can be described as
\begin{align*}
\mu_1(L_0;E) & = \frac{(n-r+1)(r+1)+(n-r)^2 -1}{2 n!} \frac{(n+1)!}{n-r+1} \\
&= \frac{n+1}{2}\left( r+1 + \frac{(n-r)^2 - 1 }{n-r+1} \right) \text{.}
\end{align*}

Then note that by Lemma \ref{epsilon:zero:big:nef:slope:a0}
$$
\mu(L_0) = \frac{\alpha_1(L_0)}{\alpha_0(L_0)} = \frac{n+1}{2(n-1)!} \frac{n!}{1} = \frac{(n+1)n}{2}  \text{.}
$$
Further, observe that
$$
(n-r+1)(r+1) + (n-r)^2 - n(n-r+1) - 1 = 0 \text{.}
$$
The conclusion is then that
$$
\mu_1(L_0;E) - \mu(L_0)  = \frac{n+1}{2} \left(r+1 + \frac{(n-r)^2- 1}{n-r+1} \right) - \frac{(n+1)n}{2} = 0 \text{.}
$$
\end{proof}

\providecommand{\bysame}{\leavevmode\hbox to3em{\hrulefill}\thinspace}
\providecommand{\MR}{\relax\ifhmode\unskip\space\fi MR }
\providecommand{\MRhref}[2]{%
  \href{http://www.ams.org/mathscinet-getitem?mr=#1}{#2}
}
\providecommand{\href}[2]{#2}

\end{document}